\newtheorem{lemma}{Lemma}[section]
\newtheorem{theorem}[lemma]{Theorem}
\newtheorem{proposition}[lemma]{Proposition}
\newtheorem{problem}[lemma]{Problem}
\newtheorem{conjecture}[lemma]{Conjecture}
\newcommand{\T}{\mathcal{T}}
\newcommand{\G}{\mathcal{G}}
\newcommand{\R}{\mathbb{R}}
\newcommand{\N}{\mathbb{N}}
\newcommand{\Y}{\mathbb{Y}}
\newcommand{\F}{\mathcal{F}}
\newcommand{\Sym}[1]{\mathfrak{S}_{#1}}
\newcommand{\M}{\mathcal{M}}
\newcommand{\vol}{\operatorname{vol}}
\newcommand{\supp}{\operatorname{supp}}
\newcommand{\Area}{\operatorname{Area}}
\newcommand{\Alg}{\mathcal{P}}
\author{Maciej Do\l\k{e}ga\addressmark{1}
\and
Piotr \'Sniady\addressmark{1}\addressmark{2}}
\title[Polynomial functions on Young diagrams]{Polynomial functions on Young
diagrams arising from bipartite graphs}
\address{\addressmark{1}Instytut Matematyczny,
Uniwersytet Wrocławski,  \mbox{pl.\ Grunwaldzki~2/4,} 50-384
Wrocław, Poland\\
\addressmark{2}Instytut Matematyczny, Polska Akademia Nauk,
ul.~Śniadec\-kich 8, 00-956 Warszawa, Poland}
\keywords{Polynomial functions on Young diagrams, coloring of bipartite
graphs, differential calculus on Young diagrams}
\begin{document}
\maketitle
\begin{abstract}
\paragraph{Abstract.}
We study the class of functions on the set of (generalized) Young diagrams
arising as the number of embeddings of bipartite graphs. We give a criterion for
checking when such a function is a polynomial function on Young diagrams (in the
sense of Kerov and Olshanski) in terms of combinatorial properties of the
corresponding bipartite graphs. Our method involves
development of a differential calculus of functions on the set of generalized
Young diagrams.

\paragraph{R\'esum\'e.}
Nous étudions la classe des fonctions sur l'ensemble des diagrammes de Young
(généralisés) qui sont définies comme des nombres d'injections de graphes
bipartites. Nous donnons un critère pour savoir si une telle fonction est une
fonctions polynomiale sur les diagrammes de Young (au sens de Kerov et
Olshanski) utilisant les propriétés combinatoires des graphes bipartites
correspondants. Notre méthode repose sur le développement d'un calcul
différentiel sur les fonctions sur les diagrammes de Young généralisés.
\end{abstract}

The full version of this extended abstract will be published elsewhere.

\section{Introduction}

\subsection{Prominent examples of polynomial functions on Young diagrams}
\label{subsec:examples}

The character $\chi^\lambda(\pi)$ of the symmetric group is usually considered
as a function of the permutation $\pi$, with the Young diagram $\lambda$ fixed.
Nevertheless, it was observed by \cite{KerovOlshanski1994} that for several
problems of the asymptotic representation theory it is convenient to do the
opposite: keep the permutation $\pi$ fixed and let the Young diagram $\lambda$
vary.  It should be stressed the Young diagram $\lambda$ is arbitrary, in
particular there are no restrictions on the number of boxes of
$\lambda$. In this way it is possible to study the structure of the series of
the symmetric groups $\Sym{1}\subset \Sym{2}\subset \cdots$ and their
representations in a uniform way. In order for this idea to be successful one
has to replace the usual characters $\chi^\lambda(\pi)$ by, so called,
\emph{normalized characters} $\Sigma_\pi(\lambda)$, namely for partitions
$\pi,\lambda$ such that $|\pi| = k$, $|\lambda| = n$ we define
$$ \Sigma_\pi(\lambda) =\begin{cases} \underbrace{n(n-1)\cdots (n-k+1)}_{k
\text{
factors}} \frac{\chi^\lambda(\pi,1^{n-k})}{\chi^\lambda(1^n)} &
\text{if } k\leq n, \\ 0 & \text{otherwise.} \end{cases}
$$

There are several other interesting examples of functions on the set of Young
diagrams which in some sense --- which will be specified in Section
\ref{subsec:algebra} --- are similar to the normalized characters; we recall
some of them in the following.

\emph{Free cumulants} $R_k(\lambda)$, introduced by \cite{Kerov2000talk}
and \cite{Biane2003}, are relatively simple functionals of the shape of the
Young diagram $\lambda$. Their advantage comes from the fact that the
normalized characters can be expressed in terms of free cumulants and that this
expression takes a particularly simple form \citep{Biane2003}.

There are other interesting functionals of the shape of the Young diagram and
\emph{fundamental functionals of shape}, which are defined in Section
\ref{sub:shape}, are very simple
examples. These functionals are quite useful and powerful in the context of
differential calculus on Young diagrams.

Jack symmetric functions \citep{Jack1970/1971} are a generalization of
Schur
functions and are indexed by an additional parameter $\alpha>0$. They can
be used to define \emph{Jack characters} $\Sigma^{(\alpha)}_\pi(\lambda)$ which
are a natural generalization of the normalized characters of the symmetric
groups. For some special values of $\alpha$ Jack symmetric functions become
well-known objects. For example, for $\alpha = 2$ we obtain so-called
\emph{zonal polynomial} which is a zonal spherical function for the Gelfand
pairs
$(\Sym{2n}, H_n)$, where $H_n$ denotes the
hyperoctahedral group. This example
has an important meaning in the
representation theory for Gelfand pairs. Jack characters are related
with $\alpha$-anisotropic Young diagrams which are deformed Young diagrams with
respect to the parameter $\alpha$. More precisely, for a Young diagram $\lambda
= (\lambda_1, \dots, \lambda_k)$ we can construct an $\alpha$-anisotropic Young
diagram $\alpha \lambda$ by rescaling the original diagram in one direction by
parameter $\alpha$, i.e. $\alpha \lambda = (\alpha \lambda_1,
\dots, \alpha \lambda_k)$. The connection between functions on
$\alpha$-anisotropic Young diagrams and Jack polynomials was obtained by
\cite{Kerov2000}.

\subsection{The algebra of polynomial functions on Young diagrams}
\label{subsec:algebra}

\cite{KerovOlshanski1994} defined  the \emph{algebra $\Alg$ of polynomial
functions on the set $\Y$ of Young diagrams}. This algebra is generated by every
example of a family of functions on Young diagrams which we presented in Section
\ref{subsec:examples}. The problem of how to express an element of one basis in
terms of the elements of another basis of this algebra is very fascinating and
it is related to Kerov polynomials, Goulden-Rattan polynomials and many other
combinatorial objects which are sometimes well-known, but sometimes far away
from being satisfactorily understood.

%

The algebra $\Alg$ of polynomial functions on $\Y$ turns
out to be isomorphic to a subalgebra of the algebra of partial permutations
of \cite{IvanovKerov1999}. Therefore we can view the elements of
$\Alg$ as (linear combinations of) partial permutations. Since the
multiplication of functions on $\Y$
corresponds to convolution of central functions on partial
permutations, we see that the algebra $\Alg$ turns out
to be very closely related to the problems of computing connection coefficients
and multiplication of conjugacy classes in the symmetric groups. 

The algebra $\Alg$ is
canonically isomorphic to the algebra of \emph{shifted symmetric functions}. 
The algebra of shifted symmetric functions is an important object in the
symmetric functions theory
and the isomorphism with the algebra $\Alg$ gives some new results in this field
due to \cite{OkounkovOlshanski1998}.



\subsection{Numbers of colorings of bipartite graphs}
\label{sub:numbersofcolor}

The set of vertices of a bipartite graph $G$ will always be $V=V_1\sqcup V_2$
with the elements of $V_1$ (respectively, $V_2$) referred to as white
(respectively, black) vertices.

We consider a coloring $h$ of the white vertices in $V_1$ by columns of the
given Young diagram $\lambda$ and of the black vertices in $V_2$ by rows of the
given Young diagram $\lambda$. Formally, a coloring is a function $h : V_1\sqcup
V_2 \rightarrow \N$ and we say that this coloring is \emph{compatible} with a
Young diagram $\lambda$ if $(h(v_1), h(v_2)) \in \lambda$ (where $(h(v_1), h(v_2))$
denotes the box placed in $h(v_1)$th column and in $h(v_2)$th row) for each
edge $(v_1,
v_2)$ of $G$ with $v_1\in V_1$, $v_2\in V_2$. Alternatively, a coloring which
is compatible with $\lambda$ can be
viewed as a function which maps the edges of the bipartite graph to boxes of
$\lambda$ with a property that if edges $e_1,e_2$ share a common white
(respectively, black) vertex then $h(e_1)$ and $h(e_2)$ are in the same column
(respectively, the same row). We can think that such a coloring defines an
embedding of a graph $G$ into the Young diagram $\lambda$.
We denote by $N_G(\lambda)$ the number of colorings of $G$ which are
compatible with $\lambda$ which is the same as the number of embeddings of
$G$ into $\lambda$ by the above identification.

%

%

\subsection{Polynomial functions on Young diagrams and bipartite graphs}

Suppose that some interesting polynomial function $F\in\Alg$ is given. It turns
out that it is very convenient to write $F$ as a linear combination of the
numbers of embeddings $N_G$ for some suitably chosen bipartite graphs $G$:
\begin{equation}
\label{eq:expansion}
 F = \sum_{G} \alpha_G  N_G
\end{equation}
which is possible for any polynomial function $F$.
This idea was initiated by \cite{F'eray'Sniady-preprint2007} who found
explicitly such linear combinations for the normalized characters
$\Sigma_\pi(\lambda)$
and who used them to give new upper bounds on the characters of the symmetric
groups. Another application of this idea was given by
\cite*{DolegaF'eray'Sniady2008} who found explicitly the expansion of the
normalized character $\Sigma_{\pi}(\lambda)$ in terms of the free cumulants
$R_s(\lambda)$; such expansion is called Kerov polynomial
\citep{Kerov2000talk,Biane2003}. 

The above-mentioned two papers concern only the case when $F=\Sigma_{\pi}$ is
the normalized character, nevertheless it is not difficult to adapt them to
other cases for which the expansion of $F$ into $N_{G}$ is known. For example,
\cite{F'eray'Sniady2011} found also such a representation for the zonal
characters
and in this way found the Kerov polynomial for the zonal polynomials.

It would be very tempting to follow this path and to generalize these results
to other interesting polynomial functions on $\Y$. However, in order to do this
we need to overcome the following essential difficulty.
\begin{problem}
\label{prob:main}
For a given interesting polynomial function $F$ on the set of Young diagrams,
how to find explicitly the expansion \eqref{eq:expansion} of $F$ as a linear
combination of the numbers of colorings $N_G$? 
\end{problem}
This problem is too ambitious and too general to be tractable. In this article
we will tackle the following, more modest question.
\begin{problem}
\label{prob:modest}
Which linear combinations of the numbers of colorings $N_G$ are polynomial
functions on the set of Young diagrams? 
\end{problem}
Surprisingly, in some cases the answer to this more modest Problem
\ref{prob:modest} can be helpful in finding the answer to the more important
Problem \ref{prob:main}.

\subsection{How characterization of polynomial functions can be useful?}
\label{sub:characterization}

Jack shifted symmetric functions $J^{(\alpha)}_\mu$ with parameter $\alpha$
are indexed by Young diagrams and are characterized (up to a multiplicative
constant) by the following conditions:
\begin{enumerate}
[label=(\roman*)]
 \item 
\label{war-znikanie}
$J^{(\alpha)}_{\mu}(\mu) \neq 0$ and for each Young diagram $\lambda$ such
that $|\lambda| \leq |\mu|$ and $\lambda
\neq \mu$ we have $J^{(\alpha)}_{\mu}(\lambda) = 0$;
\item
\label{war-polynomial}
$J^{(\alpha)}_{\mu}$ is an $\alpha$-anisotropic polynomial function on the set
of Young diagrams, i.e.~the function $\lambda\mapsto
J^{(\alpha)}_{\mu}\left(\frac{1}{\alpha}\lambda\right)$ is a polynomial
function;
\item
\label{war-stopien}
$J^{(\alpha)}_{\mu}$ has degree equal to $|\mu|$ (regarded as a shifted
symmetric function).
\end{enumerate}
The structure on Jack polynomials remains mysterious and there are several open
problems concerning them. The most interesting for us are introduced and
investigated by \cite{Lassalle2008a,Lassalle2009}.

One possible way to overcome these difficulties is to write Jack shifted
symmetric functions in the form
$$ J_\mu^{(\alpha)}(\lambda) = \sum_{\pi\vdash |\mu|}
n^{(\alpha)}_{\pi}\ \Sigma^{(\alpha)}_{\pi}(\mu)\
\Sigma^{(\alpha)}_{\pi}(\lambda),$$
where $n^{(\alpha)}_\pi$ is some combinatorial factor which is out of scope of
the current paper and where $\Sigma^{(\alpha)}_{\pi}$, called Jack character,
is an $\alpha$-anisotropic polynomial function on the set of Young diagrams.
The problem is therefore reduced to finding the expansion 
\eqref{eq:expansion} for Jack characters (which is a special case of Problem
\ref{prob:main}). It is tempting to solve this problem by guessing the right
form of the expansion \eqref{eq:expansion} and then by proving that so defined
$J^{(\alpha)}_{\mu}$ have the required properties.

We expect that verifying a weaker version of condition \ref{war-znikanie},
namely:
\begin{enumerate}
[label=(i')]
 \item 
\label{war:i'weak}
For each Young diagram $\lambda$ such that
$|\lambda| <|\mu|$ we have
$J^{(\alpha)}_{\mu}(\lambda) = 0$
\end{enumerate}
should not be too difficult; sometimes it does not matter if in the definition
of $N_G(\lambda)$ we count all embeddings of the graph into the Young diagram
or we count only injective embeddings in which each edge of the graph is mapped
into a different box of $\lambda$. If this is the case then condition
\ref{war:i'weak} holds trivially if all graphs $G$ over which we sum have
exactly $|\mu|$ edges. Also condition \ref{war-stopien} would follow trivially.
The true difficulty is to check that condition \ref{war-polynomial} is fulfilled
which is exactly the special case of Problem \ref{prob:modest} (up to the small
rescaling related to the fact that we are interested now with
$\alpha$-anisotropic polynomial functions).

%

\subsection{The main result}

The main result of this paper is Theorem \ref{theo:main} which gives a
solution to Problem \ref{prob:modest} by characterizing the linear combinations
of $N_G$ which are polynomial functions on $\Y$ in terms of a combinatorial
property of the underlying formal linear combinations of bipartite graphs $G$.

\subsection{Contents of this article}

In this article we shall highlight just the main ideas of the proof of Theorem
\ref{theo:main} because the whole proof is rather long and technical. In
particular we will briefly show the main conceptual ingredients: differential
calculus on $\Y$ and derivation of bipartite graphs.

Due to lack of space we were not able to show the full history of the presented
results and to give to
everybody the proper credits. For more history and bibliographical references we
refer to the full version
of this article \cite{Dolega'Sniady2010} which will be published elsewhere.

\section{Preliminaries}
\label{sec:preliminaries}

\subsection{Russian and French convention}

\begin{figure}[tbp]

    \begin{tikzpicture}

\begin{scope}[scale=0.5/sqrt(2),rotate=-45,draw=gray]

      \begin{scope}
          \clip[rotate=45] (-2,-2) rectangle (7.5,6.5);
          \draw[thin, dotted, draw=gray] (-10,0) grid (10,10);
          \begin{scope}[rotate=45,draw=black,scale=sqrt(2)]
              \draw[thin, dotted] (0,0) grid (15,15);
          \end{scope}
      \end{scope}

      \draw[->,thick] (-4.5,0) -- (4.5,0)
node[anchor=west,rotate=-45]{\textcolor{gray}{$z$}};
      \foreach \z in { -3, -2, -1, 1, 2, 3}
            { \draw (\z, -2pt) node[anchor=north,rotate=-45]
{\textcolor{gray}{\tiny{$\z$}}} -- (\z, 2pt); }

      \draw[->,thick] (0,-0.4) -- (0,9.5)
node[anchor=south,rotate=-45]{\textcolor{gray}{$t$}};
      \foreach \t in {1, 2, 3, 4, 5, 6, 7, 8, 9}
            { \draw (-2pt,\t) node[anchor=east,rotate=-45]
{\textcolor{gray}{\tiny{$\t$}}} -- (2pt,\t); }

      \begin{scope}[draw=black,rotate=45,scale=sqrt(2)]

          \draw[->,thick] (0,0) -- (6,0) node[anchor=west]{{{$x$}}};
          \foreach \x in {1, 2, 3, 4, 5}
              { \draw (\x, -2pt) node[anchor=north] {{\tiny{$\x$}}} -- (\x,
2pt); }

          \draw[->,thick] (0,0) -- (0,5) node[anchor=south] {{{$y$}}};
          \foreach \y in {1, 2, 3, 4}
              { \draw (-2pt,\y) node[anchor=east] {{\tiny{$\y$}}} -- (2pt,\y); }

          \draw[line width=3pt,draw=black] (5.5,0) -- (4,0) -- (4,1) -- (3,1) --
(3,2) -- (1,2) -- (1,3) -- (0,3) -- (0,4.5) ;
          \fill[fill=gray,opacity=0.1] (4,0) -- (4,1) -- (3,1) -- (3,2) -- (1,2)
-- (1,3) -- (0,3) -- (0,0) -- cycle ;

      \end{scope}
 
\end{scope}

\begin{scope}[xshift=10cm, yshift=-0.5cm, scale=0.5]
       \begin{scope}
          \clip (-4.5,0) rectangle (5.5,5.5);
          \draw[thin, dotted] (-6,0) grid (6,6);
          \begin{scope}[rotate=45,draw=gray,scale=sqrt(2)]
              \clip (0,0) rectangle (4.5,5.5);
              \draw[thin, dotted] (0,0) grid (6,6);
          \end{scope}
      \end{scope}

      \draw[->,thick] (-6,0) -- (6,0) node[anchor=west]{$z$};
      \foreach \z in {-5, -4, -3, -2, -1, 1, 2, 3, 4, 5}
            { \draw (\z, -2pt) node[anchor=north] {\tiny{$\z$}} -- (\z, 2pt); }

      \draw[->,thick] (0,-0.4) -- (0,6) node[anchor=south]{$t$};
      \foreach \t in {1, 2, 3, 4, 5}
            { \draw (-2pt,\t) node[anchor=east] {\tiny{$\t$}} -- (2pt,\t); }

%
%

\begin{scope}[draw=gray,rotate=45,scale=sqrt(2)]

          \draw[->,thick] (0,0) -- (6,0) node[anchor=west,rotate=45]
{\textcolor{gray}{{$x$}}};
          \foreach \x in {1, 2, 3, 4, 5}
              { \draw (\x, -2pt) node[anchor=north,rotate=45]
{\textcolor{gray}{\tiny{$\x$}}} -- (\x, 2pt); }

          \draw[->,thick] (0,0) -- (0,5) node[anchor=south,rotate=45]
{\textcolor{gray}{{$y$}}};
          \foreach \y in {1, 2, 3, 4}
              { \draw (-2pt,\y) node[anchor=east,rotate=45]
{\textcolor{gray}{\tiny{$\y$}}} -- (2pt,\y); }

          \draw[line width=3pt,draw=black] (5.5,0) -- (4,0) -- (4,1) -- (3,1) --
(3,2) -- (1,2) -- (1,3) -- (0,3) -- (0,4.5) ;
          \fill[fill=gray,opacity=0.1] (4,0) -- (4,1) -- (3,1) -- (3,2) -- (1,2)
-- (1,3) -- (0,3) -- (0,0) -- cycle ;

      \end{scope}
\end{scope}

    \end{tikzpicture}

    \caption{Young diagram $(4,3,1)$ shown in the French and Russian
conventions.
             The solid line represents the profile of the Young diagram.
             The coordinates system $(z,t)$ corresponding to the
             Russian convention and the coordinate system $(x,y)$
             corresponding to the French convention are shown.}

    \label{fig:french}
\end{figure}
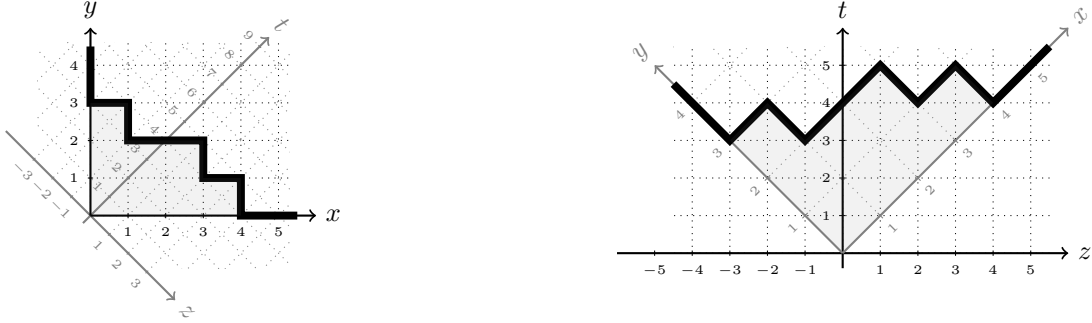

We will use two conventions for drawing Young diagrams: the \emph{French} one
in the $0xy$ coordinate system and the \emph{Russian} one in the $0zt$
coordinate system (presented on Figure
\ref{fig:french}).
Notice that the graphs in the Russian convention are created from the graphs in
the French convention by rotating counterclockwise by $\frac{\pi}{4}$ and by
scaling by a factor $\sqrt{2}$. Alternatively, this can be viewed as choice of
two
coordinate systems on the plane: $0xy$, corresponding to the French convention,
and $0zt$,
corresponding to the Russian convention. 
For a point on the plane we will define its \emph{content} as its
$z$-coordinate.

In the French coordinates will use the plane $\R^2$ equipped with the standard
Lebesgue measure,
i.e.~the area of a unit square with vertices $(x,y)$ such that $x,y\in\{0,1\}$
is equal to $1$. This measure in the Russian coordinates
corresponds to a the Lebesgue measure on $\R^2$ multiplied by the factor $2$,
i.e.~
i.e.~the area of a unit square with vertices $(z,t)$ such that $z,t\in\{0,1\}$
is equal to $2$.

\subsection{Generalized Young diagrams}

We can identify a Young diagram drawn in the Russian convention with its
profile, see Figure \ref{fig:french}. It is therefore natural to define the set
of \emph{generalized Young diagrams $\Y$ (in the Russian convention)} as the set
of functions $\omega:\R\rightarrow\R_+$ which fulfill the following two
conditions:
\begin{itemize}
 \item $\omega$ is a Lipschitz function with constant $1$, i.e.\
$|\omega(z_1)-\omega(z_2)|\leq
|z_1-z_2|$,
\item $\omega(z)=|z|$ if $|z|$ is large enough.
\end{itemize}
We will define the support of $\omega$ in a natural way:
$$ \supp(\omega) = \overline{ \{z \in \R : \omega(z) \neq |z|\}} .$$


%
%

%

\subsection{Functionals of shape}
\label{sub:shape}

We define the fundamental functionals of shape $\lambda$ for integers $k\geq 2$
\[ S_k(\lambda) = (k-1) \iint_{(x,y)\in\lambda} (x-y)^{k-2} \ dx\
dy = \frac{1}{2} (k-1) \iint_{(z,t)\in\lambda} z^{k-2} \ dz \ dt, \]
where the first integral is written in the French and the second in the Russian
coordinates.
The family $(S_k)_{k\geq 2}$ generates the algebra $\Alg$ of polynomial
functions on
Young diagrams \citep*{DolegaF'eray'Sniady2008}.

\section{Differential calculus of functions on Young diagrams}
\label{sec:diff}

\subsection{Content-derivatives}


Let $F$ be a function on the set of generalized Young diagrams and let $\lambda$
be a generalized Young diagram. We ask how quickly the value of $F(\lambda)$
would change if we change the shape of $\lambda$ by adding infinitesimal
boxes with content equal to $z$. In order to answer this informally
formulated question we define a derivative of $F$ with respect to content $z$;
this definition is inspired by the Gâteaux derivative. We say that
$$ \partial_{C_z} F (\lambda) = f(z) $$
if $f:\R\rightarrow\R$ is a continuous function such that for any $\epsilon>0$
and $C>0$ there exists $\delta>0$ such that
for any generalized Young diagrams $\omega_1$, $\omega_2$ supported on $[-C,C]$
such that
$\| \omega - \omega_i \|_{L^1}< \delta$ for $i\in\{1,2\}$
\begin{equation}
\label{eq:derivative}
 \left| F(\omega_1) - F(\omega_2) - \frac{1}{2} \int_\R f(z) \big(
\omega_1(z) - \omega_2(z) \big) \ dz \right| \leq \epsilon\ \| \omega_1
-\omega_2\|_{L^1}.
\end{equation}
The strange constant $\frac{1}{2}$  in the above definition appears because of
the fact that we are working with a Russian convention which rescales the length
and the height of the Young diagram by a factor $\sqrt{2}$, hence
$$ \Area(\lambda) = \frac{1}{2} \int_\R 
\big( \omega(z) - |z| \big) \ dz .$$

It can be shown using similar methods as in the case of a standard Gâteaux
derivative that a content-derivative has the following properties:
\begin{enumerate}[label=(\Alph*)]

\item
\label{lem:unique}
If the derivative $\partial_{C_z} F (\lambda)$ exists, then it is unique.

\item
\label{der:a}
The Leibniz rule holds, i.e. if $F_1,F_2$ are sufficiently smooth functions
then
$$ \partial_{C_z} F_1 F_2 = \left( \partial_{C_z} F_1 \right)  F_2+ F_1
\partial_{C_z} F_2. $$

\item
\label{der:b} 
For any integer $k\geq 2$
$$ \partial_{C_z} S_k = (k-1) z^{k-2}. $$

\end{enumerate}

The next proposition shows important properties of derivation of a
polynomial function on $\Y$.

\begin{proposition}
Let $F$ be a polynomial function on $\Y$.
\begin{itemize}
\item  For any Young diagram $\lambda$
the function $\R\ni z\mapsto \partial_{C_z} F (\lambda)$
is a polynomial.

\item For any $z_0\in\R$ the function
$ \Y\ni \lambda \mapsto \partial_{C_{z_0}} F (\lambda)$
is a polynomial function on $\Y$.

\item For any integer $k\geq 0$ the function
$ \Y\ni\lambda \mapsto [z^k] \partial_{C_z} F (\lambda)$
is a polynomial function on $\Y$.
\end{itemize}
\end{proposition}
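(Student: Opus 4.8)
The plan is to reduce everything to the generating family $(S_k)_{k\geq 2}$ and to the three basic properties \ref{lem:unique}, \ref{der:a}, \ref{der:b} of the content-derivative. Since $F$ is a polynomial function on $\Y$, it can be written as a polynomial in the fundamental functionals of shape, $F = W(S_{k_1},\dots,S_{k_m})$ for some ordinary polynomial $W$. First I would prove the first bullet by induction on the structure of $W$: property \ref{der:b} says $\partial_{C_z} S_k = (k-1)z^{k-2}$ is a polynomial in $z$, and the Leibniz rule \ref{der:a} together with closure of polynomials in $z$ under addition and multiplication shows that $\partial_{C_z} F(\lambda)$, for fixed $\lambda$, is again a polynomial in $z$. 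Here one should be a little careful: the coefficients of this polynomial in $z$ depend on $\lambda$, and one needs that when applying Leibniz the factors $F_1(\lambda), F_2(\lambda)$ are genuine scalars, so the product $(\partial_{C_z}F_1)F_2 + F_1\partial_{C_z}F_2$ is indeed polynomial in $z$ of degree bounded in terms of the $k_i$. This gives a uniform degree bound $D$ on $z\mapsto\partial_{C_z}F(\lambda)$ independent of $\lambda$, a fact that will be used below.

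For the second bullet, fix $z_0\in\R$. Again using \ref{der:a} and \ref{der:b} and the same inductive decomposition of $W$, I would express $\partial_{C_{z_0}}F$ explicitly as a polynomial in the functions $S_k$ (with the variable $z$ specialised to the constant $z_0$, so the factors $(k-1)z_0^{k-2}$ are just scalars). Concretely, if $F = W(S_{k_1},\dots,S_{k_m})$ then
\[
\partial_{C_{z_0}} F = \sum_{i=1}^m \frac{\partial W}{\partial x_i}(S_{k_1},\dots,S_{k_m})\,(k_i-1)\,z_0^{k_i-2},
\]
which is manifestly a polynomial in the $S_k$'s, hence an element of $\Alg$; this is exactly the assertion that $\lambda\mapsto\partial_{C_{z_0}}F(\lambda)$ is a polynomial function on $\Y$. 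The one subtlety to check is that the chain/Leibniz rule really does yield this closed form — i.e. that $\partial_{C_z}$ is a well-defined derivation on the algebra $\Alg$ — but this follows from iterating \ref{der:a} and the uniqueness statement \ref{lem:unique}, which guarantees the expression does not depend on how $F$ is written as a polynomial in the $S_k$.

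For the third bullet, combine the first two. By the first bullet, for every $\lambda$ we may write $\partial_{C_z}F(\lambda) = \sum_{k=0}^{D} c_k(\lambda)\, z^k$ with $D$ independent of $\lambda$, where $c_k(\lambda) = [z^k]\,\partial_{C_z}F(\lambda)$. Evaluating at $D+1$ distinct points $z_0,\dots,z_D$ and inverting the Vandermonde matrix expresses each $c_k(\lambda)$ as a fixed linear combination of the values $\partial_{C_{z_0}}F(\lambda),\dots,\partial_{C_{z_D}}F(\lambda)$; by the second bullet each of these is a polynomial function on $\Y$, and $\Alg$ is a vector space, so $c_k\in\Alg$. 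Alternatively one can read off $[z^k]\partial_{C_z}F$ directly from the explicit formula above as $\sum_{i:\,k_i-2=k}\frac{\partial W}{\partial x_i}(S_{k_1},\dots,S_{k_m})(k_i-1)$, again visibly in $\Alg$. I expect the only genuine obstacle to be the bookkeeping needed to justify that $\partial_{C_z}$ is a derivation on $\Alg$ with the stated effect on generators — that is, promoting properties \ref{der:a}, \ref{der:b} (stated for "sufficiently smooth" functions and for the individual generators) to a clean statement about the whole algebra, which requires knowing that polynomial functions are smooth enough for the Gâteaux-type derivative to exist and that the uniqueness \ref{lem:unique} pins the answer down; everything else is routine linear algebra.
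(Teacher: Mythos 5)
Your proposal is correct and follows essentially the same route as the paper: reduce to (monomials of) the generators $S_{k}$, apply the Leibniz rule \ref{der:a} together with $\partial_{C_z}S_k=(k-1)z^{k-2}$ from \ref{der:b} to obtain the explicit expansion $\partial_{C_z}F=\sum_i\prod_{j\neq i}S_{k_j}\,(k_i-1)z^{k_i-2}$, and read off all three bullets from that formula. The Vandermonde interpolation for the third bullet is an unnecessary detour (the coefficient of $z^k$ is visibly a linear combination of products of the $S_{k_j}$, which is how the paper concludes), but it is harmless.
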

\begin{proof}
By linearity it is enough to prove it for $F = \prod_{1 \leq i \leq n}
S_{k_i}$.
Then, thanks to the properties \ref{der:a} and \ref{der:b}, we have that
$$\partial_{C_z}F =
\sum_{1 \leq i \leq n} \prod_{\substack{1 \leq j \leq n \\ j \neq i}} S_{k_j}
(k_i-1) z^{k_i-2},$$
which is a polynomial in $z$ for fixed $\lambda$, and which is a polynomial
function on $\Y$ for fixed $z = z_0$. Moreover $[z^k]
\partial_{C_z} F (\lambda) $ is a linear combination of products of $S_{k_i}$,
hence it is a polynomial function on $\Y$, which finishes
the proof.
\end{proof}

The main result of this paper is that (in some sense) the opposite implication
is true as well
and thus it characterizes the polynomial functions on $\Y$.

In order to show it we would like to look at the content-derivative of
$N_G(\lambda)$, hence it is necessary to extend the domain of the function
$N_G$ to the set of generalized Young diagrams. This extension is very natural.
Indeed, we
consider a coloring $h : V_1\sqcup
V_2 \rightarrow \R_+$ and we say that this coloring is \emph{compatible} with
generalized Young diagram $\lambda$ if $(h(v_1), h(v_2)) \in \lambda$ for each
edge $(v_1,
v_2) \in V_1 \times V_2$ of $G$. If we fix the order of vertices in $V=V_1\sqcup
V_2$, we can think of a coloring $h$ as an element of
$\R_+^{|V|}$. Then we define
$$N_G(\lambda) = \vol\{h \in \R_+^{|V|} : h \text{ compatible with }
\lambda\}.$$ 
Notice that this is really an extension, i.e. this function restricted to the
set of ordinary Young diagrams is the same as $N_G$ which was defined in
Section \ref{sub:numbersofcolor}.

Just before we finish this section, let us state one more lemma which will be
helpful soon and which explains the connection between the usual derivation of a
function on the set of Young
diagrams when we change the shape of a Young diagram a bit, and the
content-derivative of this function.

\begin{lemma}
\label{lem:int}
Let $\R\ni t \mapsto \lambda_t $ be a sufficiently smooth trajectory in the set
of generalized Young diagrams
and let $F$ be a sufficiently smooth function on $\Y$.
Then
$$ \frac{d}{dt} F(\lambda_t) = \int_\R  \frac{1}{2} \frac{d\omega_t(z)}{dt} \
\partial_{C_z} F(\lambda_t)\  dz. $$
\end{lemma}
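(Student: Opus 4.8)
The plan is to reduce the claim to the defining property \eqref{eq:derivative} of the content-derivative by a standard ``chain rule via a telescoping argument'' in the spirit of the fundamental theorem of calculus. First I would fix a compact interval $[-C,C]$ containing the supports of all diagrams $\lambda_s$ for $s$ in a neighbourhood of the point $t$ of interest, which is possible since $t\mapsto\lambda_t$ is a smooth trajectory. I would then write, for a small increment $\Delta t$,
$$ F(\lambda_{t+\Delta t}) - F(\lambda_t) = \frac{1}{2}\int_\R \partial_{C_z} F(\lambda_t)\,\big(\omega_{t+\Delta t}(z) - \omega_t(z)\big)\,dz + E(\Delta t), $$
where, by \eqref{eq:derivative} applied with $\omega=\omega_t$, $\omega_1=\omega_{t+\Delta t}$, $\omega_2=\omega_t$, the error term satisfies $|E(\Delta t)| \leq \epsilon\,\|\omega_{t+\Delta t}-\omega_t\|_{L^1}$ once $\|\omega_{t+\Delta t}-\omega_t\|_{L^1}<\delta$. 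Since the trajectory is smooth (in particular Lipschitz in $t$ with respect to $\|\cdot\|_{L^1}$), we have $\|\omega_{t+\Delta t}-\omega_t\|_{L^1} = O(\Delta t)$, so $E(\Delta t) = o(\Delta t)$ as $\Delta t\to 0$ after letting $\epsilon\to 0$.

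Next I would divide by $\Delta t$ and pass to the limit. The main term becomes
$$ \frac{1}{2}\int_\R \partial_{C_z} F(\lambda_t)\,\frac{\omega_{t+\Delta t}(z)-\omega_t(z)}{\Delta t}\,dz, $$
and I want to move the limit inside the integral. The integrand is supported on the fixed compact set $[-C,C]$; the difference quotients $\big(\omega_{t+\Delta t}(z)-\omega_t(z)\big)/\Delta t$ converge pointwise to $\tfrac{d\omega_t(z)}{dt}$ by smoothness of the trajectory, and they are uniformly bounded (again by the Lipschitz-in-$t$ bound, uniformly in $z$), while $z\mapsto\partial_{C_z}F(\lambda_t)$ is a bounded continuous function on $[-C,C]$. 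Hence dominated convergence applies and yields
$$ \frac{d}{dt}F(\lambda_t) = \int_\R \frac{1}{2}\,\frac{d\omega_t(z)}{dt}\,\partial_{C_z}F(\lambda_t)\,dz, $$
which is exactly the assertion.

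The routine parts are the measure-theoretic justifications (choosing $C$, the dominated convergence step); the one genuinely delicate point — and the place where the hypothesis ``sufficiently smooth'' has to be unpacked — is the interplay of the two limits $\Delta t\to 0$ and $\epsilon\to 0$ in the error estimate: one must check that for each fixed $\epsilon$ the inequality $\|\omega_{t+\Delta t}-\omega_t\|_{L^1}<\delta(\epsilon)$ holds for all sufficiently small $\Delta t$, so that $E(\Delta t)/\Delta t$ can be made smaller than any prescribed constant times the (bounded) ratio $\|\omega_{t+\Delta t}-\omega_t\|_{L^1}/\Delta t$. This is where ``sufficiently smooth'' for $F$ should be read as: $\partial_{C_z}F$ exists along the whole trajectory and the convergence in \eqref{eq:derivative} is locally uniform in $\lambda$ on the relevant compact family of diagrams. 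Granting that, the argument above closes. I expect this uniformity bookkeeping to be the only real obstacle; everything else is the classical Gâteaux-derivative computation that properties \ref{lem:unique}--\ref{der:b} were set up to support.
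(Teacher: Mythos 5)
Your argument is correct and is essentially the paper's own proof spelled out in full: the paper simply states that the lemma is ``a simple consequence of equality \eqref{eq:derivative}'', and your telescoping difference-quotient argument with the error bound from \eqref{eq:derivative} and dominated convergence is exactly that consequence, with the ``sufficiently smooth'' hypotheses used where one would expect. No gap to report.
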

\begin{proof}
This is a simple consequence of equality \eqref{eq:derivative}.
\end{proof}

\section{Derivatives on bipartite graphs}
\label{sec:graphs}

Let $G$ be a bipartite graph. We denote
$$\partial_z G=\sum_e (G,e)$$
which is a formal sum (formal linear combination) which runs over all edges $e$
of $G$. We will think about the pair $(G,e)$ that it is graph $G$ with one edge
$e$ decorated with the symbol $z$. More generally, if $\G$ is a linear
combination of bipartite graphs, this definition extends by linearity.


If $G$ is a bipartite graph with one edge decorated by the symbol $z$, we define
$$ \partial_x G = \sum_f G_{f\equiv z} $$
which is a formal sum which runs over all edges $f\neq z$
which share a common black vertex with the edge $z$. The symbol
$G_{f\equiv z}$ denotes the graph $G$ in which the edges $f$ and $z$
are glued together (which means that also the white vertices of $f$ and $z$
are glued together and that from the resulting graph all multiple edges are
replaced
by single edges). The edge resulting from gluing $f$ and $z$ will be decorated
by
$z$. More generally, if $\G$ is a linear combination of bipartite graphs,
this definition extends by linearity.

We also define
$$ \partial_y G = \sum_f G_{f\equiv z} $$
which is a formal sum which runs over all edges $f\neq z$
which share a common white vertex with the edge $z$.

\begin{conjecture}
\label{theo:rozn}
Let $\G$ be a linear combination of bipartite graphs with a property that
\[
 \left( \partial_x + \partial_y \right) \partial_z \G = 0 . \]
Then for any integer $k\geq 1$
\[
\left( \partial_x^k - (-\partial_y)^k \right) \partial_z \G = 0 . \]
\end{conjecture}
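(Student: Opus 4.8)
Write $A=\partial_x$, $B=\partial_y$, and $D=\partial_z\G$, so the hypothesis reads $(A+B)D=0$. I would first prove, by induction on $j$, that $(A+B)A^{j}D=0$ for all $j\ge 0$. The base case $j=0$ is the hypothesis. For the inductive step, use $(A+B)A=A(A+B)+[B,A]$, so that
$(A+B)A^{j+1}D = A\,(A+B)A^{j}D + [B,A]\,A^{j}D = [B,A]\,A^{j}D$
by the inductive hypothesis. Granting this, the conclusion follows by a second short induction, this time on $k$: for $k=1$ we have $A D=-BD=(-1)^{1}B D$ directly from $(A+B)D=0$, and if $A^{k}D=(-1)^{k}B^{k}D$ then $(-1)^{k+1}B^{k+1}D=-B\,B^{k}D=-B\,(-1)^{k}A^{k}D\cdot(-1)^{k}=-BA^{k}D$, so $A^{k+1}D=(-1)^{k+1}B^{k+1}D$ is exactly the relation $(A+B)A^{k}D=0$, which we already have. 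Thus everything comes down to the \emph{key claim}: the commutator $[\partial_x,\partial_y]$ annihilates $\partial_x^{\,j}\partial_z\G$ for every $j\ge 0$ and every linear combination $\G$ of bipartite graphs.

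\textbf{Step 2: the commutator identity.} I expect the key claim to hold \emph{unconditionally}, with no use of the hypothesis $(\partial_x+\partial_y)\partial_z\G=0$; the whole point is the presence of $\partial_z$. The operators $\partial_x$ and $\partial_y$ do \emph{not} commute on an arbitrary graph carrying one decorated edge, but decorating a fresh edge and summing over \emph{all} edges of $\G$ cancels the discrepancy. Conceptually this is the combinatorial shadow of the fact that $\partial_x$ and $\partial_y$ implement the two coordinate directional derivatives of the embedding volume $N_{(G,e)}$ with respect to the position of the box attached to the decorated edge $e$, and mixed partial derivatives commute; the hypothesis $(\partial_x+\partial_y)\partial_z\G=0$ then says precisely that this volume depends on the box only through its content, whence the desired $\partial_x^{k}=(-\partial_y)^{k}$ is the chain rule. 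To prove the claim combinatorially I would expand $\partial_x\partial_y\partial_z\G$ and $\partial_y\partial_x\partial_z\G$ as sums over triples (an edge $e$ to decorate, then an edge $f$ to glue onto $e$, then a further edge $g$ to glue), and build a sign-free bijection between the two expansions that swaps the order of the two gluings. The extension from $\partial_z\G$ to $\partial_x^{\,j}\partial_z\G$ should follow by the same bookkeeping (equivalently, once one checks the Jacobi-type relation $[\partial_x,[\partial_x,\partial_y]]=0$ on such expressions, the commutator can be pushed through the powers of $\partial_x$).

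\textbf{Main obstacle.} The hard part is Step 2: making the bijection between the two triple-expansions completely explicit while tracking which parallel edges get identified by the gluing. The delicate cases are those in which the first gluing creates a new incidence (so the second gluing sees an edge that was not adjacent to the decorated edge before) or collapses parallel edges; one must verify that such ``order-dependent'' contributions occur with matching multiplicities on both sides, and this matching becomes clean only after summing over all choices of the decorated edge $e$ — which is exactly why the identity must be stated for $\partial_z\G$ rather than for a single decorated graph. Once the commutator identity (in the form annihilating $\partial_x^{\,j}\partial_z\G$) is established, Step 1 is purely formal and yields the theorem.
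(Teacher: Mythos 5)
There is a genuine gap, and it is worth knowing first that the paper itself does \emph{not} prove this statement: it is stated as a conjecture, and the authors explicitly say they can prove it only under additional assumptions. So you are attempting an open problem, and everything hinges on the part of your argument that you have only sketched. Your Step 1 is formally correct \emph{granted} the key claim that $[\partial_x,\partial_y]$ annihilates $\partial_x^{\,j}\partial_z\G$ for all $j\geq 0$, but Step 2 is a plan rather than a proof ("I would expand\dots and build a sign-free bijection"), so as it stands nothing is established.

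More importantly, the key claim in the strength you need is doubtful. The commutator certainly does not vanish on a single decorated graph: take the path $w_1-b_1-w_2-b_2-w_3$ with edges $e_1=(w_1,b_1)$, $e_2=(w_2,b_1)$, $e_3=(w_2,b_2)$, $e_4=(w_3,b_2)$ and decorate $e_2$. Then $\partial_y\partial_x(G,e_2)$ produces one copy of the two-edge graph with a black centre, while $\partial_x\partial_y(G,e_2)$ produces two copies (after $\partial_y$ glues $b_1\equiv b_2$, the decorated edge sees \emph{two} neighbouring edges at the merged black vertex), so $[\partial_x,\partial_y](G,e_2)\neq 0$. As you yourself observe, the only plausible mechanism for cancellation is the summation over the choice of decorated edge, i.e.\ the fact that the argument of the commutator is of the form $\partial_z(\cdot)$. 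But in your induction the commutator must be applied to $\partial_x^{\,j}\partial_z\G$ for $j\geq 1$, which is \emph{not} of that form, so the cancellation mechanism you invoke is simply unavailable there; "the same bookkeeping" does not carry over. The fallback you mention, the operator identity $[\partial_x,[\partial_x,\partial_y]]=0$ (which would indeed let you push the commutator through the powers of $\partial_x$), is again asserted without proof, and since $[\partial_x,\partial_y]$ already fails to vanish on single decorated graphs there is no evident reason for this double-commutator identity to hold. In short, your reduction relocates the difficulty rather than resolving it: the hard combinatorial content --- controlling the new incidences and the collapses of parallel edges created by gluing, \emph{beyond} the first application of $\partial_x$ or $\partial_y$, possibly using the hypothesis $(\partial_x+\partial_y)\partial_z\G=0$ in an essential way --- is exactly what remains open, and is presumably where the authors' "additional assumptions" enter.
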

We are able to prove \label{theo:rozn} under some additional assumptions,
however we believe it is true in general.

\section{Characterization of functions arising from bipartite graphs which are
polynomial}
\label{sec:proof}

\subsection{The main result}

\begin{theorem}
\label{theo:main}
Let $\G$ be a linear combination of bipartite graphs such that
\begin{equation}
\label{eq:main-assumption}
 \left( \partial_x^k - (-\partial_y)^k \right) \partial_z \G = 0
\end{equation}
for any integer $k > 0$.
Then
$\lambda\mapsto
N_\G^\lambda$ is a polynomial function on the set of
Young diagrams.

%

%

\end{theorem}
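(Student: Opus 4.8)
The plan is to use the differential calculus developed in Section~\ref{sec:diff} together with the combinatorial derivatives of Section~\ref{sec:graphs} as a bridge, and to argue by induction on the degree of $\G$ (measured, say, by the number of edges). The key linking fact is that the content-derivative on the analytic side corresponds to the edge-decoration operator $\partial_z$ on the combinatorial side: one first establishes the identity
\[
\partial_{C_z} N_\G(\lambda) = \text{(an expression built from $N_{\partial_z\G}$ evaluated along the profile of $\lambda$)}.
\]
More precisely, if $(G,e)$ is a graph with a decorated edge, then fixing the content of the box to which $e$ is mapped to be $z$ and integrating out the remaining freedom splits according to whether a neighbouring edge of $e$ shares its white vertex (a horizontal constraint, governed by the $x$-coordinate and hence by $\partial_y$ in the notation of the paper) or its black vertex (governed by $\partial_x$). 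The operators $\partial_x$ and $\partial_y$ on graphs are exactly designed to record the ``collision'' terms that arise when one differentiates the volume $N_G(\lambda)=\vol\{h:\ h\text{ compatible}\}$ with respect to moving the profile, via Lemma~\ref{lem:int}. So the first step is to prove a precise dictionary lemma: $\partial_{C_z} N_\G$ is, up to the explicit polynomial-in-$z$ weights coming from property~\ref{der:b}, expressible through $N$ applied to $\partial_x^j\partial_y^{k-j}\partial_z\G$-type combinations.

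The second step is to exploit the hypothesis~\eqref{eq:main-assumption}. The vanishing $(\partial_x^k-(-\partial_y)^k)\partial_z\G=0$ for all $k>0$ is precisely the algebraic condition that makes the ``collision corrections'' in the formula for $\partial_{C_z}N_\G$ cancel in such a way that $z\mapsto \partial_{C_z}N_\G(\lambda)$ is a \emph{polynomial} in $z$ whose coefficients are again numbers of colorings $N_{\G'}$ for linear combinations $\G'$ of strictly smaller degree. Concretely, the content-derivative of $N_G$ naively would involve boundary terms indexed by how many edges pile onto a single row or column; the antisymmetry built into~\eqref{eq:main-assumption} forces the dangerous non-polynomial contributions (those that would see the Lipschitz corner structure of $\omega$ rather than just its polynomial moments) to disappear, leaving a clean polynomial whose coefficients $[z^k]\partial_{C_z}N_\G$ are of the form $N_{\G_k}$ with $\G_k$ again satisfying~\eqref{eq:main-assumption} (this closure under the operation is the technical heart and should be checked using Conjecture~\ref{theo:rozn}'s conclusion, which is available here since it is taken as a hypothesis in the shape of~\eqref{eq:main-assumption}).

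The third step assembles these into an inductive proof. For the base case, a graph with no edges gives $N_G(\lambda)=1$ (or a constant), which is trivially polynomial. For the inductive step: assuming $N_{\G'}$ is a polynomial function on $\Y$ for every admissible $\G'$ of smaller degree, we know $\partial_{C_z}N_\G(\lambda)=\sum_k c_k(\lambda)\,z^k$ where each $c_k$ is a polynomial function on $\Y$ by the induction hypothesis and the closure from Step~2. Then one integrates back: using Lemma~\ref{lem:int} along a path $\lambda_t$ interpolating from the empty diagram to $\lambda$, we recover $N_\G(\lambda)$ as an integral of $\tfrac12\,\dot\omega_t(z)\,\partial_{C_z}N_\G(\lambda_t)$, and since $\partial_{C_z}N_\G$ is a polynomial in $z$ with polynomial-function coefficients, this integral expresses $N_\G$ in terms of the fundamental functionals $S_k$ (which generate $\Alg$, per Section~\ref{sub:shape}) — hence $N_\G\in\Alg$. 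One must also check a normalization/constant-of-integration point, i.e.\ that $N_\G$ on the empty diagram is the right constant; this is immediate from the volume definition.

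The main obstacle I expect is Step~2: showing that the content-derivative of $N_\G$ is genuinely a \emph{polynomial} in $z$ (not merely piecewise-polynomial or dependent on the fine structure of $\omega$), and that its coefficients stay within the class of admissible graph combinations. This is where the full force of hypothesis~\eqref{eq:main-assumption} — all $k$ simultaneously, with the alternating signs — must be used, and where the bookkeeping of which edges collide in rows versus columns becomes delicate. Establishing the precise combinatorial identity $\partial_{C_z}N_\G = \sum_k (k{-}1)\,z^{k-2}\,N_{(\text{something})_k}\,$, and verifying that ``something$_k$'' again satisfies the main hypothesis, is the technical core; everything else is formal manipulation with the calculus already set up in Section~\ref{sec:diff}.
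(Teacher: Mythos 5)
Your skeleton does follow the paper's strategy in outline: your ``dictionary lemma'' is the paper's Lemma~\ref{lem:der_z} ($\partial_{C_z}N_\G=N_{\partial_z\G}$), and the statement that $z\mapsto N_{\partial_z\G}(\lambda)$ is a polynomial whose $z$-coefficients are polynomial functions on $\Y$ is exactly the auxiliary lemma the paper proves before the main theorem. But your Step~2 replaces that lemma by a much stronger structural claim --- that $[z^k]\partial_{C_z}N_\G$ equals $N_{\G_k}$ for explicit linear combinations $\G_k$ of strictly smaller degree which again satisfy \eqref{eq:main-assumption} --- and this claim is not justified by anything you cite. The hypothesis \eqref{eq:main-assumption} (equivalently, the conclusion of Conjecture~\ref{theo:rozn}) says nothing about such a closure property; the paper does not assert it either, and instead proves polynomial-functionality of the coefficients by a separate induction on the power $i$ of $z$, using Lemma~\ref{lem:int}, not by exhibiting the coefficients as numbers of colorings of smaller graphs. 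Indeed, in $N_{(G,z)}(\lambda)$ the decorated endpoints are pinned to $\bigl(\tfrac{\omega(z)+z}{2},\tfrac{\omega(z)-z}{2}\bigr)$, so the $z$-coefficients of this volume are not in any evident way of the form $N_{\G'}(\lambda)$. Consequently your induction on the degree of $\G$ never gets off the ground: the inductive hypothesis is never shown to apply to the objects produced in Step~2, which is precisely the step you yourself flag as the obstacle.

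The reconstruction step (your Step~3) also has a gap. Integrating along a generic path from the empty diagram gives $N_\G(\lambda)$ as a double integral of $\tfrac12\dot\omega_t(z)\,\partial_{C_z}N_\G(\lambda_t)$, but the coefficients $c_k(\lambda_t)$ vary with $t$ in a way you do not control, so nothing forces the $t$-integral to be a polynomial in the generators $S_j(\lambda)$. The paper avoids this entirely with a homogeneity (Euler-identity) trick: after reducing, WLOG, to the case where every graph contributing to $\G$ has the same number $m$ of vertices, $N_\G$ is a volume in $\R^m$ and hence $N_\G(t\lambda)=t^m N_\G(\lambda)$, so $N_\G(\lambda)=\tfrac1m\frac{d}{dt}N_\G(t\lambda)\big|_{t=1}$; only the derivative at $t=1$ is needed, Lemma~\ref{lem:int} turns it into $\tfrac{1}{2m}\int_\R\bigl(\omega(z)-z\omega'(z)\bigr)\,N_{\partial_z\G}(\lambda)\,dz$ on piecewise-affine diagrams, and the $z$-integral against $(i+1)z^i$ yields $S_{i+2}(\lambda)$, giving $N_\G=\tfrac{1}{2m}\sum_i\F_i S_{i+2}\in\Alg$. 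Without this homogeneity argument (or a substitute controlling the $t$-dependence), your ``integrate back'' step is incomplete, and the normalization remark about the empty diagram does not repair it.
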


The main idea of the proof is to find a connection between content-derivative
of a function $N_\G$ and a combinatorial derivation of the
underlying linear combination of bipartite graphs $\G$; we present it in the
following.

\subsection{Colorings of bipartite graphs with decorated edges}

Let a Young diagram $\lambda$ and a bipartite graph $G$ be given.
If an edge of $G$ is decorated by a real number $z$, we decorate
its white end by the number $\frac{\omega(z)+z}{2}$ (which is the $x$-coordinate
of the point at the profile of $\lambda$ with contents equal to $z$) and
we decorate its black end by the number $\frac{\omega(z)-z}{2}$ (which is the
$y$-coordinate
of the point at the profile of $\lambda$ with contents equal to $z$).
If some disjoint edges are decorated by $n$ real numbers $z_1, \dots, z_n$, then
we decorate white and black vertices in an analogous way.

For a bipartite graph $G$ with some disjoint edges decorated we define
$N_G(\lambda)$, the number of colorings of $\lambda$, as the volume of
the set of functions from undecorated vertices to $\R_+$ such that these
functions extended by values of decorated vertices are compatible with
$\lambda$.


We will use the following lemma:
\begin{lemma}
\label{lem:der_z}
Let $(G,z)$ be a bipartite graph with one edge decorated by a real number $z$. Then
\begin{itemize}
 \item
\label{lem:der_z1}
$ \frac{d}{dz} N_{(G,z)}(\lambda) =
\frac{\omega'(z)+1}{2}N_{\partial_x(G,z)}(\lambda) +
\frac{\omega'(z)-1}{2}N_{\partial_y(G,z)}(\lambda)$, 
 \item
\label{lem:der_z2}
$\partial_{C_z} N_G = N_{\partial_z G}$.
\end{itemize}
\end{lemma}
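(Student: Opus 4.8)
The plan is to prove Lemma \ref{lem:der_z} in two parts, treating the decorated edge as a ``frozen'' coordinate and differentiating the volume of the compatible polytope with respect to the content parameter $z$.

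For the first item, fix the generalized Young diagram $\lambda$ with profile $\omega$ and fix all undecorated vertices as free coordinates in $\R_+$. The decorated edge $z$ pins its white endpoint to the value $a(z):=\frac{\omega(z)+z}{2}$ and its black endpoint to $b(z):=\frac{\omega(z)-z}{2}$. Thus $N_{(G,z)}(\lambda)$ is the volume of the slice $\{h : h \text{ compatible with }\lambda,\ h(\text{white of }z)=a(z),\ h(\text{black of }z)=b(z)\}$ — more precisely the volume of the projection onto the genuinely free coordinates, with these two coordinates substituted. First I would write this volume as an integral over the free coordinates of an indicator function, isolate the dependence on $z$, and differentiate under the integral sign. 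The parameter $z$ enters in two ways: through $a(z)$, whose derivative is $\frac{\omega'(z)+1}{2}$, and through $b(z)$, whose derivative is $\frac{\omega'(z)-1}{2}$. Differentiating the constraint ``$(a(z), h(\text{black of }f))\in\lambda$ for every edge $f$ sharing the white vertex of $z$'' with respect to the $a$-dependence produces exactly a boundary term that reassembles as $N_{\partial_x(G,z)}(\lambda)$: gluing $f$ to $z$ forces the black endpoint of $f$ onto the same row, which is the combinatorial meaning of $\partial_x$. Symmetrically, the $b$-dependence yields $N_{\partial_y(G,z)}(\lambda)$. Chaining these via the chain rule gives the stated formula. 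The routine but slightly delicate point here is the Fubini/boundary-term bookkeeping: one must check that the only edges whose constraints are ``active'' when one endpoint moves to the profile are precisely those sharing that endpoint with $z$, and that the resulting boundary integral is the volume of the glued graph; this is where multiple edges being collapsed to single edges in the definition of $G_{f\equiv z}$ matters.

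For the second item, I would combine the first item with Lemma \ref{lem:int}. Let $\R\ni t\mapsto\lambda_t$ be a smooth trajectory of generalized Young diagrams. On one hand, by definition $\partial_z G=\sum_e (G,e)$ is the sum over all edges $e$ of $G$ with $e$ decorated; each term $(G,e)$ with its edge decorated by the running content value is, after integrating that content against $\frac12\,d\omega$, exactly the contribution of varying that edge's position. I would compute $\frac{d}{dt}N_G(\lambda_t)$ by viewing an embedding of $G$ as a choice, for each edge, of a box; differentiating picks out one edge at a time to move along the profile, which is precisely $\sum_e$ over the decorated-edge graphs, i.e.\ the first item summed and integrated. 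Concretely: $\frac{d}{dt}N_G(\lambda_t)=\sum_e \int_\R \frac{d}{dt}\big[\text{density of edge }e\text{ at content }z\big]\cdots$, and using the first item together with $\frac{d\omega_t}{dt}$ in place of the static profile derivative, this equals $\int_\R \frac12 \frac{d\omega_t(z)}{dt}\, N_{\partial_z G}(\lambda_t)\, dz$. Comparing with Lemma \ref{lem:int} applied to $F=N_G$ and using the uniqueness of the content-derivative (property \ref{lem:unique}), we conclude $\partial_{C_z}N_G=N_{\partial_z G}$.

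The main obstacle I anticipate is making the first item rigorous at the level of the measure-theoretic differentiation: $N_{(G,z)}(\lambda)$ is a volume of a polytope that depends on $z$ both through the pinned coordinates and through the shape of $\lambda$ itself (via $\omega$ appearing in the constraints for other edges touching the profile), and one must justify differentiating under the integral, identify which constraints contribute boundary terms, and verify the combinatorial identification of those boundary terms with $N_{\partial_x(G,z)}$ and $N_{\partial_y(G,z)}$ — including the subtlety that gluing two edges that already shared the opposite-colored vertex, or gluing edges that produces a multi-edge, is handled correctly by the single-edge convention. Once this ``one-edge'' computation is nailed down, the passage to the second item and then to the full content-derivative statement is essentially a linearity-and-uniqueness argument. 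I would therefore spend most of the effort setting up the polytope-slicing picture carefully and verifying the boundary-term identification; the rest follows formally.
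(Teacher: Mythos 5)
The paper itself omits the proof of this lemma (it is declared ``not difficult, but quite technical''), so there is no official argument to compare with; your overall plan --- realize $N_{(G,z)}(\lambda)$ as the volume of a polytope with two pinned coordinates, differentiate under the integral sign so that each constraint contributes a boundary slice identified with a glued graph, and then obtain the second item from the defining property of the content-derivative --- is the natural one. However, your bookkeeping in the first item is genuinely wrong, and since that bookkeeping \emph{is} the content of the first item, this is not a cosmetic slip. For an edge $f$ sharing the pinned \emph{white} vertex (column $a(z)=\frac{\omega(z)+z}{2}$), the constraint $(a(z),h(b_f))\in\lambda$ reads $h(b_f)\le Y(a(z))=b(z)$, where $Y$ is the column-height function; so although the constraint ``mentions'' $a(z)$, its $z$-dependence enters only through the column height, the boundary moves at rate $b'(z)=\frac{\omega'(z)-1}{2}$ (not $a'(z)$), and the boundary slice pins $h(b_f)=b(z)$, i.e.\ glues the \emph{black} endpoints --- which by the paper's definitions is a term of $\partial_y(G,z)$, not of $\partial_x(G,z)$ ($\partial_x$ runs over edges sharing the black vertex and glues the white endpoints). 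Symmetrically, edges through the pinned black vertex have constraint $h(w_f)\le X(b(z))=a(z)$, rate $a'(z)=\frac{\omega'(z)+1}{2}$, and produce the $\partial_x$ graphs. Your assignment (``$a$-dependence, rate $\frac{\omega'+1}{2}$, edges at the white vertex, black endpoints forced onto the same row $=\partial_x$'') crosses these wires; carried out literally it yields $\frac{\omega'+1}{2}N_{\partial_y(G,z)}+\frac{\omega'-1}{2}N_{\partial_x(G,z)}$, which is not the statement of the lemma.

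A secondary, smaller point: for the second item you propose to compute $\frac{d}{dt}N_G(\lambda_t)$, compare with Lemma \ref{lem:int} for $F=N_G$, and invoke uniqueness. But Lemma \ref{lem:int} already assumes $N_G$ is sufficiently smooth, i.e.\ that $\partial_{C_z}N_G$ exists, so this comparison is mildly circular unless existence is established first. The clean route is to verify the defining estimate \eqref{eq:derivative} directly with $f(z)=N_{\partial_z G}(\lambda)$: adding an infinitesimal strip of area at content $z$ creates, for each edge $e$ of $G$, exactly the embeddings counted by $N_{(G,e)}$ with $e$ pinned at content $z$, and summing over $e$ gives $N_{\partial_z G}$. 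With the corrected identification in item one and this direct verification (plus the usual care at points where $|\omega'(z)|=1$, where the row-length/column-height identifications above degenerate), your proof scheme goes through.
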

The proof of this lemma is not difficult, but it is quite technical and we omit
it.

Using Lemma \ref{lem:der_z}, Theorem \ref{theo:rozn}, and results from Section
\ref{sec:diff} one can prove the following lemma: 

\begin{lemma}
Let the assumptions of Theorem \ref{theo:main} be fulfilled.
Then
\begin{itemize}
 \item
\label{lem:N_G-a}
$ z\mapsto  N_{\partial_z \G}(\lambda)  $ is a polynomial and
 \item
\label{lem:N_G-b}
$ \lambda \mapsto [z^i] N_{\partial_z \G}(\lambda) $
is a polynomial function on $\Y$ for any $i$.
\end{itemize}
\end{lemma}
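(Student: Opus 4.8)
The plan is to combine Lemma~\ref{lem:der_z} with the differential machinery of Section~\ref{sec:diff} and Theorem~\ref{theo:rozn}, treating the map $z\mapsto N_{\partial_z\G}(\lambda)$ as the ``content-derivative'' $\partial_{C_z}N_\G$. First I would use part~\ref{lem:der_z2} of Lemma~\ref{lem:der_z}, extended by linearity to the linear combination $\G$, to identify $\partial_{C_z}N_\G=N_{\partial_z\G}$; this is the bridge between the analytic side (content-derivatives of functions on $\Y$) and the combinatorial side (the formal operation $\partial_z$ on graphs).

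Next I would establish item~\ref{lem:N_G-a}: for a fixed Young diagram $\lambda$ the function $z\mapsto N_{\partial_z\G}(\lambda)$ is a polynomial in $z$. Here I would invoke part~\ref{lem:der_z1} of Lemma~\ref{lem:der_z} together with hypothesis~\eqref{eq:main-assumption} (which by Theorem~\ref{theo:rozn} gives $(\partial_x^k-(-\partial_y)^k)\partial_z\G=0$ for all $k\geq 1$). The idea is that $\frac{d}{dz}N_{(G,z)}$ mixes $N_{\partial_x}$ and $N_{\partial_y}$ with the coefficients $\frac{\omega'(z)\pm 1}{2}$, which are locally constant (piecewise $0$ or $1$) away from the finitely many corners of $\lambda$; repeatedly differentiating in $z$ produces combinations of $\partial_x^k\partial_z\G$ and $(-\partial_y)^k\partial_z\G$, and the vanishing assumption forces these higher derivatives to collapse. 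After finitely many steps one gets that $z\mapsto N_{\partial_z\G}(\lambda)$ is piecewise polynomial with derivatives that eventually vanish; continuity of $N_{\partial_z\G}(\lambda)$ in $z$ across the corners then glues the pieces into a single polynomial. The main obstacle is precisely this gluing argument: one must check that the local polynomial expressions on either side of a corner of $\lambda$ agree to all orders, which requires knowing that the combinatorial identity kills exactly the discrepancy coming from the jump in $\omega'$; this is where Theorem~\ref{theo:rozn} (rather than merely its hypothesis) is essential, and where the bookkeeping is most delicate.

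Finally, for item~\ref{lem:N_G-b}, I would extract the coefficients: having shown $z\mapsto N_{\partial_z\G}(\lambda)$ is a polynomial of degree bounded uniformly in $\lambda$ (the bound coming from the number of edges appearing in $\G$), I would interpolate. Evaluating $N_{\partial_z\G}(\lambda)$ at sufficiently many fixed real values $z_0$ and using the second bullet of the Proposition on content-derivatives --- or more directly, using part~\ref{lem:der_z2} together with the fact that $\partial_{C_{z_0}}$ of a polynomial function is again a polynomial function (which is exactly the content of that Proposition, here applied in reverse to deduce it for $N_\G$ once we know $N_\G$ itself is polynomial) --- one writes each coefficient $[z^i]N_{\partial_z\G}(\lambda)$ as a fixed linear combination of the values $N_{\partial_z\G}(\lambda)$ at finitely many $z_0$'s via Lagrange interpolation. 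It then suffices to know that each $\lambda\mapsto N_{\partial_{z_0}\G}(\lambda)=N_{(\partial_z\G)|_{z=z_0}}(\lambda)$ is a polynomial function on $\Y$; but $(\partial_z\G)|_{z=z_0}$ is an honest linear combination of bipartite graphs (with one vertex-pair frozen at the profile point of content $z_0$), and one reduces this to a lower-complexity instance, closing an induction on the number of edges. The delicate point to watch is uniformity of the degree bound in $z$ over all $\lambda$, which is needed to make the interpolation argument produce a single fixed formula independent of $\lambda$; this follows because $\partial_z$ does not increase the number of edges and each additional $z$-derivative either kills a term or leaves the edge count unchanged.
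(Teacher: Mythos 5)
Your treatment of the first statement follows the paper's route: differentiate $N_{\partial_z \G}(\lambda)$ repeatedly in $z$ via the first part of Lemma~\ref{lem:der_z}, use the hypothesis \eqref{eq:main-assumption} to make the resulting expressions independent of $\omega'(z)$, and observe that derivatives of order larger than $|V|-2$ vanish, so the function is a polynomial in $z$ of uniformly bounded degree. One caveat: continuity across the corners of $\lambda$ does not by itself glue piecewise polynomials into one polynomial; what saves the argument is precisely that \eqref{eq:main-assumption} removes the $\omega'$-dependence from the derivative formula (already at order one, $(\partial_x+\partial_y)\partial_z\G=0$ turns the right-hand side of the first part of Lemma~\ref{lem:der_z} into $-N_{\partial_y\partial_z\G}$). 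You name this mechanism but then fall back on continuity; the collapse must actually be carried out, as in the paper's sketch.

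The second statement is where your proposal has a genuine gap. Lagrange interpolation reduces $[z^i]N_{\partial_z\G}(\lambda)$ to the values $N_{\partial_z\G}(\lambda)\big|_{z=z_0}$ at finitely many fixed $z_0$, so you need each $\lambda \mapsto N_{\partial_z\G}(\lambda)\big|_{z=z_0}$ to be a polynomial function on $\Y$. Your first justification --- the Proposition of Section~\ref{sec:diff} applied to $N_\G$ ``once we know $N_\G$ itself is polynomial'' --- is circular: this lemma is an ingredient of the proof of Theorem~\ref{theo:main}, which is exactly the statement that $N_\G \in \Alg$. Your second justification --- that $(\partial_z\G)|_{z=z_0}$ is an honest linear combination of bipartite graphs and an induction on the number of edges closes the argument --- does not work in the paper's framework: a graph with an edge frozen at the profile point of content $z_0$ is not a bipartite graph in the sense of Section~\ref{sub:numbersofcolor}, since the decorations $\frac{\omega(z_0)\pm z_0}{2}$ of its endpoints depend on $\lambda$ itself. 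Concretely, for the path $w_1 - b - w_2$ with the edge $(w_1,b)$ frozen at $z_0$, the resulting function of $\lambda$ is the length of the row of $\lambda$ at height $\frac{\omega(z_0)-z_0}{2}$, which is not a polynomial function on $\Y$; so the individual frozen terms are not in $\Alg$, and you give no inductive statement showing that the particular combination is. The paper avoids this entirely: it proves the second bullet by induction on $i$ using Lemma~\ref{lem:int} and the dilation argument from the proof of Theorem~\ref{theo:main}, integrating $N_{\partial_z\G}$ against $\big(\omega(z)-z\omega'(z)\big)\,z^i$ so that only the functionals $S_{i+2}$ and previously handled coefficients appear, keeping the whole computation inside $\Alg$ and never evaluating at a fixed content $z_0$. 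Either adopt that route, or supply a genuine proof that the fixed-$z_0$ evaluation of the full combination lies in $\Alg$ without presupposing Theorem~\ref{theo:main}.
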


\begin{proof}
The main ideas of the proof are the following. In order to show the first
property we are looking at $\frac{d^i}{dz^i} N_{\partial_z \G}$ and using Lemma
\ref{lem:der_z1} we can show that $\frac{d^i}{dz^i} N_{\partial_z \G} = 0$ for
any $i > |V| - 2$.
The proof of the second property is going by induction on
$i$ and it uses an Lemma \ref{lem:int} in a similar way like the proof of
Theorem \ref{theo:main} below. It is quite technical, so let us stop here.
\end{proof}

\subsection{Proof of the main result}

\begin{proof}[of Theorem \ref{theo:main}]
We can assume without loss of
generality that every graph which contributes to
$\G$ has the same number of vertices, equal to $m$. Indeed, if this is not the
case,
we can write $\G=\G_2+\G_3+\cdots$ as a finite sum, where every graph
contributing to
$\G_i$ has $i$ vertices; then clearly \eqref{eq:main-assumption} is fulfilled
for every
$\G':=\G_i$.

\begin{figure}[tb]
\centerline{
    \begin{tikzpicture}[scale=0.6]
      \draw[->,thick] (-6,0) -- (6,0) node[anchor=west]{$z$};
      \draw[->,thick] (0,-0.4) -- (0,6) node[anchor=south]{$t$};
      \draw[line width=3pt] (-5.5,5.5) coordinate (left end) -- (-3.7,3.7)
coordinate (z1) -- ++(1.2,0.8) coordinate (z2) --
            ++(1.4,-1) coordinate (z3) -- ++(2,1) coordinate (z4) -- (4.1,4.1)
coordinate (z5) --
           (5.5,5.5) coordinate (right end);
      \draw[dotted] (left end) -- (0,0) -- (right end);
      \draw[dashed] (z1) -- (z1 |- 0,0) node[anchor=north]{$z_1$};
      \draw[dashed] (z2) -- (z2 |- 0,0) node[anchor=north]{$z_2$};
      \draw[dashed] (z3) -- (z3 |- 0,0) node[anchor=north]{$z_3$};
      \draw[dashed] (z4) -- (z4 |- 0,0) node[anchor=north]{$z_4$};
      \draw[dashed] (z5) -- (z5 |- 0,0) node[anchor=north]{$z_5$};
    \end{tikzpicture}
}
\caption{Piecewise-affine generalized Young diagram.}
\label{fig:affine}
\end{figure}
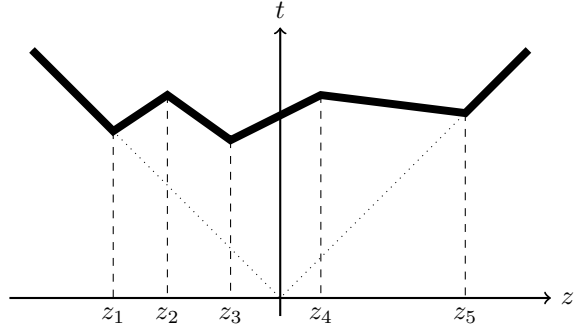

Assume that $\lambda$ is a piecewise affine generalized Young diagram such that
$|\omega'(z)|<1$ for any $z$ in the support of $\omega$ (see Figure
\ref{fig:affine}).
For any $t \in \R_+$ we define a generalized Young diagram $t \lambda$
which is a dilation of $\lambda$ by $t$. A profile $\tilde{\omega}$ of $t
\lambda$ is given by $\tilde{\omega}(s) = t\omega(s/t)$. By Lemma
\ref{lem:int} we can write:
\begin{multline*}
N_\G(\lambda) =  \frac{1}{m}\left. \frac{d}{dt}
N_\G^{t \lambda} \right|_{t=1}  = \frac{1}{2m} \int_\R 
 \frac{d}{dt}\big(t\omega(z/t)\big)\ \partial_{C_z}
N_\G(t\lambda)  \bigg|_{t=1} dz = \\
\frac{1}{2m} \int_\R \big(\omega(z)
-z\omega'(z)\big)\ \partial_{C_z}
N_\G(\lambda) dz.
\end{multline*}
Then, by Lemma \ref{lem:der_z} we have that
\begin{multline*}
N_\G(\lambda) = \frac{1}{2m} \int_\R \big(\omega(z) -z\omega'(z)\big) \sum_{0
\leq i \leq m-2} (i+1)z^i
\F_i(\lambda) dz = \\
\frac{1}{2m} \sum_{0 \leq i \leq m-2} \F_i(\lambda) \int_\R
\big(\omega(z) -z\omega'(z)\big) (i+1)z^i dz = 
\frac{1}{2m} \sum_{0 \leq i \leq m-2}
\F_i(\lambda)S_{i+2}(\lambda), 
\end{multline*}
where $\F_i = \frac{1}{i+1}[z^i]N_{\partial_z \G}(\lambda)$ is a polynomial
function on $\Y$
for each $i$ by Lemma \ref{lem:N_G-b}. It finishes the proof.
\end{proof}

\section{Applications}
\label{sec:appl}

\subsection{Bipartite maps}

A labeled (bipartite) graph drawn on a surface will be called a
\emph{(bipartite) map}. If this surface is orientable and its orientation is
fixed, then the underlying map is called \emph{oriented}; otherwise the map is
\emph{unoriented}. We will always assume that the surface is minimal in the
sense that after removing the graph from the surface, the latter becomes a
collection of disjoint open discs. If we draw an edge of such a graph with a fat
pen and then take its boundary, this edge splits into two \emph{edge-sides}. In
the above definition of the map, by \emph{`labeled'} we mean that each edge-side
is labeled with a number from the set $[2n]$ and each number from this set is
used exactly once. 

Each bipartite labeled map can be constructed by the following procedure. For a
partition $\lambda\vdash n$ we consider a family of $\ell(\lambda)$ bipartite
polygons with the number of edges given by partition $2\lambda = (2\lambda_1,
\dots, 2\lambda_{\ell(\lambda)})$. Then we label the edges of the polygons by
elements of $[2n]$ in such a way that each number is used exactly once. A
\emph{pair-partition} of $[2n]$ is defined as a family $P = \{V_1, \dots,
V_{n}\}$ of disjoint sets called \emph{blocks} of $P$, each containing exactly
two elements and such that $\bigcup P = [2n]$. For a given pair-partition $P$
we glue together each pair of edges of the polygons which is matched by $P$ in
such a way that a white vertex is glued with the other white one, and a black
vertex with the other black one.

\subsection{Normalized and zonal characters}

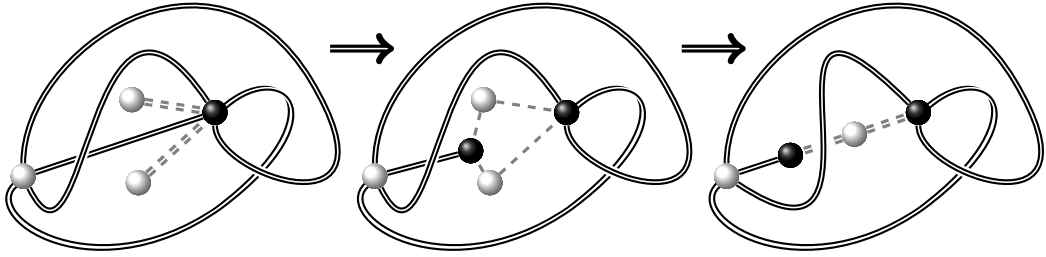
\begin{figure}
\begin{tikzpicture}[white2/.style={circle,ball color=white},
black1/.style={circle,draw=black,fill=black!80,thick,
inner sep=0pt,minimum size=4mm},
white1/.style={circle,draw=black,top color=white,bottom color=black!20, inner
sep=0pt,minimum size=4mm}, black2/.style={circle,ball
color=black}, auto, bend
right, scale = 0.85]

\begin{scope}
\draw[thick] (5,5) .. controls (6,6) and (7,5) .. (5,3.5) ..
controls (3,2.15)
and (1.2,3.55) .. (2,3.9);
\draw[white, line width=2pt] (5.1,5) .. controls (6,5.9) and (7.1,5) ..
(5.1,3.5) ..
controls (3,2)
and (1,3.5) .. (2,4);
\draw[thick] (5.1,5) .. controls (6,5.9) and (7.1,5) ..
(5.1,3.5) ..
controls (3,2)
and (1,3.5) .. (2,4);

\node[white2] (w1) at (2,4) {};
\node[white2] (w2) at (3.8,3.9) {};
\node[white2] (w3) at (3.7,5.2) {};
\node[black2] (b1) at (5,5) {};

\draw[double, thick] (w1) -- (b1);
\draw[gray, dashed, double, very thick] (w2) -- (b1);
\draw[gray, dashed, double, very thick] (w3) -- (b1);
\draw[white, line width=4pt] (w1) .. controls (3,2) and (3,8) .. (b1);
\draw[double, thick] (w1) .. controls (3,2) and (3,8) .. (b1);
\draw[white, line width=4pt] (b1) .. controls (5,4) and (8,3) ..
(6.5,5.5) ..
controls (5,8)
and (2,6) .. (w1);
\draw[double, thick] (b1) .. controls (5,4) and (8,3) ..
(6.5,5.5) ..
controls (5,8)
and (2,6) .. (w1);
\end{scope}

\begin{scope}[xshift = 5.5cm]
\draw[thick] (5,5) .. controls (6,6) and (7,5) .. (5,3.5) ..
controls (3,2.15)
and (1.2,3.55) .. (2,3.9);
\draw[white, line width=2pt] (5.1,5) .. controls (6,5.9) and (7.1,5) ..
(5.1,3.5) ..
controls (3,2)
and (1,3.5) .. (2,4);
\draw[thick] (5.1,5) .. controls (6,5.9) and (7.1,5) ..
(5.1,3.5) ..
controls (3,2)
and (1,3.5) .. (2,4);

\node[white2] (w1) at (2,4) {};
\node[white2] (w2) at (3.8,3.9) {};
\node[white2] (w3) at (3.7,5.2) {};
\node[black2] (b1) at (5,5) {};
\node[black2] (b2) at (3.5,4.4) {};

\draw[double, thick] (w1) -- (b2);
\draw[gray, dashed, very thick] (w2) -- (b1);
\draw[gray, dashed, very thick] (w3) -- (b1);
\draw[gray, dashed, very thick] (w2) -- (b2);
\draw[gray, dashed, very thick] (w3) -- (b2);
\draw[white, line width=4pt] (w1) .. controls (3,2) and (3,8) .. (b1);
\draw[double, thick] (w1) .. controls (3,2) and (3,8) .. (b1);
\draw[white, line width=4pt] (b1) .. controls (5,4) and (8,3) ..
(6.5,5.5) ..
controls (5,8)
and (2,6) .. (w1);
\draw[double, thick] (b1) .. controls (5,4) and (8,3) ..
(6.5,5.5) ..
controls (5,8)
and (2,6) .. (w1);
\end{scope}

\begin{scope}[xshift = 3cm]
\draw[thick] (13,5) .. controls (14,6) and (15,5) .. (13,3.5) ..
controls (11,2.15)
and (9.2,3.55) .. (10,3.9);
\draw[white, line width=2pt] (13.1,5) .. controls (14,5.9) and (15.1,5) ..
(13.1,3.5) ..
controls (11,2)
and (9,3.5) .. (10,4);
\draw[thick] (13.1,5) .. controls (14,5.9) and (15.1,5) ..
(13.1,3.5) ..
controls (11,2)
and (9,3.5) .. (10,4);

\node[white2] (w1') at (10,4) {};
\node[white2] (w2') at (12,4.66) {};
\node[black2] (b2') at (11,4.33) {};
\node[black2] (b1') at (13,5) {};

\draw[double, thick] (w1') -- (b2');
\draw[gray, dashed, double, very thick] (w2') -- (b1');
\draw[gray, dashed, double, very thick] (w2') -- (b2');
\draw[white, line width=4pt] (w1') .. controls (13,2) and (10,8) .. (b1');
\draw[double, thick] (w1') .. controls (13,2) and (10,8) .. (b1');
\draw[white, line width=4pt] (b1') .. controls (13,4) and (16,3) ..
(14.5,5.5) ..
controls (13,8)
and (10,6) .. (w1');
\draw[double, thick] (b1') .. controls (13,4) and (16,3) ..
(14.5,5.5) ..
controls (13,8)
and (10,6) .. (w1');
\end{scope}

\draw[->, very thick, double] (6.8,6) -- (7.8,6);

\begin{scope}[xshift=5.5cm]
\draw[->, very thick, double] (6.8,6) -- (7.8,6);
\end{scope}

\end{tikzpicture}
\caption{Example of a construction of a map and its subtree $(\tilde{\M},
\tilde{\T})$ (on the
right) from a given map with its subtree $(\M, \T)$ (on the left), such that
$\tilde{\M}/\tilde{\T} =
\M/\T$. Face type of maps is given by $\mu = (12)$. As pair-partitions we have
$\M = \{ \{1,7\}, \{2,3\}, \{4,6\}, \{5,11\}, \{8,9\}, \{10,12\}\}$, $\T =
\{\{2,3\}, \{8,9\}\}$, $\tilde{\M} = \{ \{1,7\}, \{2,8\}, \{3,9\}, \{4,6\},
\{5,11\}, \{10,12\}\}$, $\tilde{\T} = \{\{2,8\}, \{3,9\}\}$.}
\label{fig:maptree}
\end{figure}

\begin{theorem}[\cite{F'eray'Sniady2011}]
Let $\Sigma^{(\alpha)}_{\mu}$ denote the Jack character with parameter $\alpha$.
Then:
\begin{equation}
\label{alpha1}
 \Sigma^{(1)}_{\mu} = \sum_{\M}(-1)^{|V_b(\M)|}\ N_{\M},
\end{equation}
where the summation is
over all labeled bipartite oriented maps with the face type $\mu$ and
\begin{equation}
\label{alpha2}
 \Sigma^{(2)}_{\mu} = \sum_{\M}(-2)^{|V_b(\M)|}\ N_{\M},
\end{equation}
where the summation is over
all labeled bipartite maps (not necessarily oriented) with the face type $\mu$.
\end{theorem}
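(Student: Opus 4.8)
The plan is to deduce both identities from Theorem~\ref{theo:main} of the present paper together with a characterization of Jack characters, splitting the work into two essentially independent tasks: \textbf{(A)} showing that the two right-hand sides are polynomial functions on $\Y$, and \textbf{(B)} identifying these polynomial functions with $\Sigma^{(1)}_\mu$ and $\Sigma^{(2)}_\mu$.

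For task~(A), regard each sum as a formal linear combination of the underlying bipartite graphs, $\G^{(1)}_\mu=\sum_\M(-1)^{|V_b(\M)|}\M$ over labelled oriented bipartite maps of face type $\mu$, and $\G^{(2)}_\mu=\sum_\M(-2)^{|V_b(\M)|}\M$ over all labelled bipartite maps of face type $\mu$. It suffices to verify that these satisfy the hypothesis \eqref{eq:main-assumption}; then $N_{\G^{(\alpha)}_\mu}$ is a polynomial function by Theorem~\ref{theo:main}. Here $\partial_z$ marks one edge $e$, and $\partial_x$, $\partial_y$ glue $e$ to an edge sharing its black, respectively white, endpoint, which in the language of maps is a local surgery at a corner; such a surgery changes the number of black vertices by $0$ or $-1$ and hence rescales the weight by a controlled factor (for $\alpha=2$ the extra factor $2$ in the weight is precisely what is needed to absorb the merging of two black vertices). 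By Theorem~\ref{theo:rozn} it is enough to establish the case $k=1$, i.e.\ $(\partial_x+\partial_y)\partial_z\G^{(\alpha)}_\mu=0$, and to check that the additional hypotheses under which Theorem~\ref{theo:rozn} is proved are met for these families --- or, failing that, to run the argument below directly for all $k$. I would prove the $k=1$ identity by a sign-reversing involution on marked maps of face type $\mu$: after choosing a spanning subtree in each map and grouping maps with a common contraction (cf.\ Figure~\ref{fig:maptree}), the marked edge can be slid around a corner of an incident face so that the two terms of $(\partial_x+\partial_y)\partial_z$ are matched with opposite weights. Making this pairing well-defined --- dealing with degenerate configurations and, in the $\alpha=2$ case, keeping track of the orientability of the surface --- is the step I expect to be the main obstacle.

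For task~(B) I would use a characterization of $\Sigma^{(\alpha)}_\mu$ of the kind recalled in Section~\ref{sub:characterization}. Since every map contributing to $\G^{(\alpha)}_\mu$ has exactly $|\mu|$ edges, the weak vanishing property~\ref{war:i'weak} follows from a short inclusion--exclusion that turns the count of all embeddings into the count of injective ones, and the correct degree bound follows by tracking the number of edges through the expansion $N_{\G^{(\alpha)}_\mu}=\frac{1}{2m}\sum_i\F_i\,S_{i+2}$ produced in the proof of Theorem~\ref{theo:main}. To pin down the remaining multiplicative constant I would restrict to the family of multirectangular generalized Young diagrams $\mathbf p\times\mathbf q$, which is large enough that a polynomial function on $\Y$ is determined by its restrictions to the $\ell$-rectangular diagrams for all $\ell$: on such a diagram $N_\M(\mathbf p\times\mathbf q)$ is an explicit sum of monomials, one for every assignment of the white vertices of $\M$ to rectangles with the black vertices chosen compatibly, and the resulting signed sum over maps is exactly the right-hand side of Stanley's character formula in Stanley's coordinates (for $\alpha=1$, this is the expansion of $\Sigma_\mu$ into numbers of colorings found in \cite{F'eray'Sniady-preprint2007}) and of its non-orientable, zonal analogue (for $\alpha=2$). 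Comparing these polynomials gives the equality of polynomial functions.

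Finally, let me record an alternative route for task~(B) that avoids Stanley's formula: once polynomiality and property~\ref{war:i'weak} are established, it suffices to match the top-degree part of $N_{\G^{(\alpha)}_\mu}$ --- which is governed by the planar (genus $0$) maps alone --- with the known leading term of $\Sigma^{(\alpha)}_\mu$ expressed through free cumulants, and then to invoke the characterization of normalized, respectively zonal, characters. In both routes the genuinely new ingredient beyond the present paper is the verification of \eqref{eq:main-assumption} for $\G^{(1)}_\mu$ and $\G^{(2)}_\mu$; granting that, Theorem~\ref{theo:main} carries the analytic weight of the argument.
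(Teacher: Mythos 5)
Your overall skeleton is the same as the paper's: verify the hypothesis \eqref{eq:main-assumption} for the signed sums of maps by a local unglue/reglue surgery at a star (cf.\ Figure~\ref{fig:maptree}), invoke Theorem~\ref{theo:main} for polynomiality, and identify the result via the characterization of Section~\ref{sub:characterization}, getting \ref{war:i'weak} from the passage to injective embeddings and \ref{war-stopien} from the edge count. However, there is a genuine gap in your treatment of the case $\alpha=2$. The hypothesis \eqref{eq:main-assumption} is a formal identity on linear combinations of graphs, and the surgery that matches a black-centered star against a white-centered star changes $|V_b(\M)|$ by exactly $k$; under a plain bijection the paired terms of $\partial_x^k\partial_z$ and $(-\partial_y)^k\partial_z$ carry weights $(-2)^{|V_b(\M)|}$ and $(-1)^k(-2)^{|V_b(\M)|+k}=2^k(-2)^{|V_b(\M)|}$, which do \emph{not} cancel. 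Your parenthetical claim that ``the extra factor $2$ in the weight is precisely what is needed to absorb the merging of two black vertices'' would require proving an exact $2^k$-to-one multiplicity statement about regluings of unoriented maps, which you neither state nor prove. Moreover, condition \ref{war-polynomial} for $\alpha=2$ demands $\alpha$-anisotropic polynomiality, i.e.\ polynomiality of $\lambda\mapsto\Sigma^{(2)}_\mu\bigl(\tfrac{1}{2}\lambda\bigr)$, which your plan never addresses. The paper disposes of both difficulties at once by the observation
$\sum_{\M}(-2)^{|V_b(\M)|}N_{\M}(\lambda)=\sum_{\M}(-1)^{|V_b(\M)|}N_{\M}(2\lambda)$,
which reduces the $\alpha=2$ case to the sign weights $(-1)^{|V_b(\M)|}$, for which the surgery is an honest sign-reversing bijection; some such reduction is indispensable in your argument as well.

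Two further points. First, \ref{theo:rozn} is only a \emph{conjecture} in this paper (proved under additional assumptions that are not even stated here), so your reduction to $k=1$ is not available as a citation; your fallback --- running the surgery directly for every $k$ --- is in fact what the paper does, and the black-star to white-star regluing works verbatim for all $k$ (also, the relevant subtree is the $(k+1)$-edge star being contracted, not a spanning subtree). Second, your identification step via multirectangular evaluations and Stanley-type formulas is a different route from the paper's: the paper stays within the characterization \ref{war:i'weak}, \ref{war-polynomial}, \ref{war-stopien}, citing \cite{F'eray'Sniady2011} only for the injective-embedding modification, whereas for $\alpha=1$ the Stanley-type coloring expansion is essentially the identity \eqref{alpha1} itself, so that route amounts to quoting the earlier result rather than re-deriving it; it is legitimate for pinning down the normalization, but the genuinely new content should be concentrated in the verification of \eqref{eq:main-assumption}.
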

\begin{proof}
Due to the characterization of a Jack symmetric function which was given in
Section \ref{sub:characterization} it suffices to show that the right hand sides
of \eqref{alpha1} and \eqref{alpha2} satisfy conditions \ref{war-znikanie},
\ref{war-polynomial}, \ref{war-stopien}. Due to lack of space, instead of
\ref{war-znikanie} we will show a weaker condition \ref{war:i'weak}.

Definition of $N$ gives us property \ref{war-stopien} immediately.
Property \ref{war:i'weak} can be shown, as it was mentioned in
Section \ref{sub:characterization}, by proving that if we change functions $N$
on the right hand sides of \eqref{alpha1} and
\eqref{alpha2} by some other functions $\tilde{N}$ which count `injective
embeddings', the equalities will still hold. The proof of that will be the same
as in \cite{F'eray'Sniady2011}, hence we
omit it.

The novelty in the current proof is showing the property \ref{war-polynomial}.
First, we notice that 
$$\sum_{\M}(-2)^{|V_b(\M)|}N_{\M}(\lambda) =
\sum_{\M}(-1)^{|V_b(\M)|}N_{\M}(2\lambda).$$

In the following we shall prove that condition \eqref{eq:main-assumption} is
fulfilled.
Let us look at $\partial_x^k (\M, z)$ for some
bipartite map $\M$ with
one decorated edge by $z$. The procedure of derivation with respect to $x$ can
be viewed as taking all subtrees of $\M$ which consist of $k+1$ edges
connected by
a black vertex and where one edge is decorated by $z$ and collapsing them to one
decorated edge. Let us choose such a subtree $\T$. We can do the following
procedure with $\T$: we unglue every edges corresponding to $\T$ locally
in a
way that we create $k$ copies of a black vertex and local orientation of
each vertex is preserved; in this way we obtained locally a bipartite
$2k+2$-gon;
then we glue it again but in such a way that we glue white vertices together in
this $2k+2$-gon (see Figure \ref{fig:maptree}). 
We obtained in this way a new bipartite map $\tilde{\M}$, such that
$|V_b(\tilde{\M})| =
|V_b(\M)| + k$ and which contains a subtree $\tilde{\T}$ with $k+1$ edges and
one
white vertex. Moreover, collapsing of $\T$ in
$\M$ to one decorated edge gives
us the same bipartite graph as collapsing of $\tilde{T}$ in $\tilde{\M}$ to
one decorated edge. We should check that this map has a face type $\mu$, but
this is clear from our construction. Of course we
can do the same procedure if we start from
$\partial_y^k (\M, z)$, because of the symmetry. These two procedures are
inverses of each other hence
\eqref{eq:main-assumption} holds true. Applying the Main Theorem
\ref{theo:main} to our case we obtain the property \ref{war-polynomial},
which finishes the proof.
\end{proof}

\acknowledgements
\label{sec:ack}

Research was supported by the
Polish Ministry of Higher Education research
grant N N201 364436 for the years 2009--2012.

\bibliographystyle{abbrvnat}
\bibliography{biblio2009}

\end{document}